\documentclass[letterpaper, 10 pt, conference]{ieeeconf}  

\IEEEoverridecommandlockouts                              
\usepackage{amssymb}

\usepackage[pdftex]{graphicx}
\usepackage{tikz}
\usepackage{amsmath}
\usepackage{framed}
\usepackage{bm}
\usepackage{cite}
\usepackage{comment}
\usepackage{algorithmic}
\usepackage{algorithm}
\usepackage{url}
\usepackage{booktabs}
\usepackage{array}

\usetikzlibrary{intersections,calc,arrows.meta}

\usepackage[sort]{cleveref}
\usepackage{mathtools}
\usepackage{subcaption}
\usepackage{multirow}
\usepackage{threeparttable}

\usepackage{pgfplots}

\usepackage{mathtools}
\mathtoolsset{showonlyrefs=true} 

\usepackage{amsfonts}
\usepackage{color}
\usepackage{cases}
\usepackage{enumerate}

\newtheorem{theorem}{Theorem}
\newtheorem{assumption}{Assumption}

\newtheorem{remark}{Remark}
\newtheorem{lemma}{Lemma}
\newtheorem{example}{Example}
\newtheorem{corollary}{Corollary}

\usepackage{blkarray}

\DeclareMathOperator{\diag}{diag}
\DeclareMathOperator{\tr}{tr}

\newcommand{\Real}{\mathbb{R}}

\newcommand{\bb}{\mathbb}

\title{\LARGE \bf
Task-Dependent Weighted Average Energy Controllability Score\\ for Network Intervention
}

\author{Kazuhiro Sato
\thanks{This work was supported by JST PRESTO, Japan, Grant Number JPMJPR25K4. }
\thanks{K. Sato is with Department of Mathematical Informatics, Graduate School of Information Science and Technology, The University of Tokyo, Tokyo 113-8656, Japan.
        {\tt\small kazuhiro@mist.i.u-tokyo.ac.jp}}
}

\begin{document}

\maketitle
\thispagestyle{empty}
\pagestyle{empty}

\begin{abstract}
Controllability scores provide principled information on where intervention should be applied in large-scale network systems when explicit control design is difficult.
Two representative controllability scores are the volumetric controllability score (VCS) and the average energy controllability score (AECS).
While both are important, the standard AECS treats all state-transition directions uniformly. In this paper, we propose the weighted average energy controllability score (W-AECS), a task-dependent extension of AECS that incorporates a prescribed transition of interest through a weighting matrix.
We show that the proposed formulation admits a control-theoretic interpretation via expected minimum-energy steering, and establish strict convexity and generic uniqueness.
These results support the interpretation of W-AECS as a well-defined node-wise task-dependent intervention score.
We also illustrate the proposed method on a structural brain-network dataset, where transition-dependent weighting reshapes the scoring pattern, yielding a VCS-like preference among the highest-ranked regions while preserving an overall structure distinct from both standard AECS and VCS.
\end{abstract}


\section{Introduction}

Large-scale network systems often come with control objectives of clear practical importance, while the explicit design of effective control inputs remains extremely difficult. Such situations arise naturally in many real applications. In brain networks, one may wish to alleviate symptoms or restore desirable functional states \cite{gu2015brain, luppi2024cognitive}; in social networks, one may wish to induce behavioral change or promote socially beneficial outcomes \cite{aral2017exercise}. Although these objectives may be clear at a conceptual level, designing input signals that reliably realize them is often beyond practical reach.

One major reason is that the available mathematical model is typically only a coarse approximation of the true system \cite{thompson2022escape}. Brain networks are governed by complex physical and biological mechanisms that are not yet fully understood \cite{bahrami2023brain}, while social networks involve heterogeneous interactions, strategic behavior, and context-dependent effects that are even harder to capture accurately \cite{fatemi2023network}. One might hope to bypass detailed modeling through purely data-driven input design, but this is often unrealistic in practice. In systems such as brain networks and social networks, the repeated large-scale interventions needed for reliable design or validation are frequently limited by ethical, practical, and inferential constraints.

Nevertheless, decision making cannot be deferred indefinitely \cite{cao2026cphs}. Even when the exact time-varying input needed to achieve a control objective is unknown, it is still valuable to know where intervention should be concentrated and how strongly intervention resources should be allocated. Such information can already support meaningful action. For example, in brain networks, intervention locations may correspond to stimulation sites and intervention intensities to stimulation frequency or dosage \cite{gu2015brain, luppi2024cognitive}; in social systems, they may correspond to individuals or regions and to the level of budget or resources assigned to them \cite{yanchenko2024influence}.

Controllability scores are intended for situations in which exact control synthesis is out of reach, but meaningful intervention decisions still have to be made \cite{sato2022controllability, sato2025uniqueness}. Rather than prescribing a detailed control law, they provide principled controllability-based information on where intervention should be applied and how strongly available resources should be allocated. Concretely, one introduces a virtual allocation vector over candidate intervention nodes and solves an optimization problem based on a controllability Gramian; the resulting optimizer is then interpreted as a node-wise controllability score. In the existing framework, two representative Gramian-based objectives have been considered: a volumetric objective, which favors broad steerability in state space, and an average-energy objective, which favors low minimum control energy on average. The optimal allocations obtained from these two objectives are called the volumetric controllability score (VCS) and the average energy controllability score (AECS), respectively. Both VCS and AECS are important controllability scores that capture different aspects of controllability.
The present paper focuses on AECS, whose objective is directly tied to minimum control energy.
However, this objective is intrinsically isotropic: it treats all state-space directions uniformly and does not distinguish their practical importance.

To address this limitation, this paper develops W-AECS, a weighted extension of AECS derived from finite-horizon minimum-energy steering. The basic observation is that the control effort required to realize a state transition depends on the discrepancy between the desired terminal state and the state reached by the uncontrolled dynamics. If the initial and terminal states are modeled probabilistically, then averaging the corresponding minimum control energy naturally yields a weighted controllability criterion determined by the second-order geometry of the relevant transitions. Hence, the proposed formulation is not an ad hoc reweighting of AECS, but a principled generalization grounded in finite-horizon control. It recovers the standard AECS as a special isotropic case, while allowing anisotropic and task-dependent transition structures to be incorporated systematically. Because the formulation is inherently finite-horizon, it remains meaningful even for marginally stable or unstable systems, and is therefore well suited to intervention problems involving a distribution of practically relevant transitions rather than a single prescribed target state.

The main contributions of this paper are as follows.
\begin{enumerate}
\item We derive a finite-horizon weighted extension of AECS from expected minimum-energy steering, thereby giving a control-theoretic interpretation to the proposed task-dependent criterion. Importantly, the formulation requires only the mean and covariance of the induced state displacement, so that a complete joint probabilistic model of the initial and terminal states is unnecessary.

\item We establish the optimization-theoretic foundations of the proposed formulation, including directional derivative formulas and sufficient conditions for strict convexity with respect to the virtual-actuation allocation.
We further prove a generic uniqueness result: for any system matrix, the optimizer is unique for almost all time horizons.
This implies that W-AECS is generically well defined.
Hence, W-AECS can be interpreted as a meaningful task-dependent centrality measure, since each node receives a uniquely determined intervention-importance value.
\item We demonstrate the proposed W-AECS on a structural brain-network dataset and show that task-dependent weighting can substantially reshape intervention scoring.
By constructing the weighting matrix from a transition toward a VCS-favored target state, we show that W-AECS can reproduce a VCS-like preference among the highest-ranked ROIs, while still retaining an overall scoring structure distinct from both the standard AECS and VCS.
In addition, W-AECS exhibits a pronounced separation between the highest- and lowest-ranked ROIs, a feature not observed for VCS or AECS in the corresponding brain-network experiments reported in \cite[Section~IV]{sato2025uniqueness}.
\end{enumerate}

The rest of this paper is organized as follows.
Section~II introduces the virtual actuation framework and derives the proposed weighted average energy objective from expected minimum-energy steering.
Section~III defines W-AECS and studies its basic properties, including existence, strict convexity, generic uniqueness, and a projected-gradient computation scheme.
Section~IV reports a numerical experiment on a structural brain-network dataset and examines how task-dependent weighting affects the resulting scoring in comparison with AECS and VCS.
Section~V concludes the paper.

{\bf Notations:}
Let $\bb{R}$ denote the set of real numbers.
The symbol $O$ denotes the zero matrix of appropriate size.
For a matrix $X$, $X^\top$ denotes the transpose.
For a square matrix $X$, $\tr(X)$, $\det(X)$, and $\exp(X)$ denote the trace, determinant, and matrix exponential of $X$, respectively.
For any symmetric matrix \(X\), we write \(X\succeq O\) (resp., \(X\succ O\)) to denote that \(X\) is positive semidefinite (resp., positive definite), and \(\lambda_{\min}(X)\) and \(\lambda_{\max}(X)\) denote its minimum and maximum eigenvalues, respectively.
For a vector $x$, $x_i$ denotes its $i$th component.
The $n\times n$ identity matrix is denoted by $I_n$.
The symbol $\bm{1}$ denotes all-ones vector in $\Real^n$, and $e_i$ is the $i$th standard basis vector.
The symbol $\diag(a_1,\ldots, a_n)$ denotes the diagonal matrix with diagonal entries $a_1,\dots,a_n$.
For $T>0$, $L^2([0,T];\mathbb{R}^n)$ denotes the space of square-integrable $\mathbb{R}^n$-valued functions on $[0,T]$.
The notation $\mathcal{N}(\mu,\Sigma)$ denotes the Gaussian distribution with mean $\mu$ and covariance $\Sigma$.
Thus, $x\sim\mathcal{N}(\mu,\Sigma)$ means that $x$ is a Gaussian random vector with mean $\mu$ and covariance $\Sigma$.
For a differentiable mapping \(F\), \({\rm D}F(x)[h]\) denotes the directional derivative of \(F\) at \(x\) along the direction \(h\).

\section{Preliminaries}

We consider an autonomous linear time-invariant network system
\begin{equation}
\dot{x}(t)=Ax(t),
\quad
A\in\mathbb{R}^{n\times n},
\label{eq:autonomous}
\end{equation}
where
$x(t)=\begin{pmatrix}x_1(t)&\cdots&x_n(t)\end{pmatrix}^\top\in\mathbb{R}^n$
is the state vector, and $x_i(t)$ denotes the state associated with node $i$.
The matrix $A$ encodes the weighted interactions among nodes.
Since autonomous system \eqref{eq:autonomous} has no physical input channel, the usual notion of controllability is not directly applicable. However, our goal is not to design an actual control law for \eqref{eq:autonomous}, but to quantify \emph{where} intervention resources should be allocated and \emph{how strongly} they should be allocated. To this end, we introduce virtual input channels at all state nodes and use the resulting controllability Gramian.

\subsection{Virtual actuation model}

To quantify intervention importance, we augment \eqref{eq:autonomous} with virtual inputs and consider the system
\begin{equation}
\dot{x}(t)=Ax(t)+B(p)u(t),
\label{eq:virtual_system}
\end{equation}
where 
\[
p\in
\Delta_n:=\left\{p\in\mathbb{R}^n\mid {\bf 1}^\top p=1,\,\,p_i\geq 0\,\,(i=1,\ldots, n)\right\}
\]
is a resource-allocation vector on the simplex, and
\begin{align}
    B(p) := \mathrm{diag}(\sqrt{p_1},\dots,\sqrt{p_n}). \label{diagB}
\end{align}
Under this construction, each state node is associated with a corresponding virtual input channel, and the scalar $p_i$ specifies the amount of virtual actuation resource allocated to node $i$. In this sense, the vector $p$ represents an intervention-allocation profile over candidate nodes. This interpretation is illustrated in \cite[Fig.~2]{sato2025uniqueness}.

\subsection{Finite-horizon controllability Gramian}

For a fixed horizon $T>0$, the finite-horizon controllability Gramian associated with \eqref{eq:virtual_system} is defined by
\begin{equation}
W(p,T)
=
\int_0^T \exp(At)B(p)B(p)^\top \exp(A^\top t)\,{\rm d}t.
\label{eq:Wp_def}
\end{equation}
Using the diagonal form of $B(p)$ in \eqref{diagB}, we obtain
\begin{equation}
W(p,T)=\sum_{i=1}^n p_i W_i(T),
\label{eq:Wp_linear}
\end{equation}
where
\begin{equation}
W_i(T)
:=
\int_0^T \exp(At)e_ie_i^\top \exp(A^\top t)\,{\rm d}t.
\label{eq:Wi_def}
\end{equation}
Thus, $W_i(T)$ is the finite-horizon controllability Gramian corresponding to a single virtual input channel acting only on node $i$, while $W(p,T)$ is the weighted sum of these node-wise Gramians according to the allocation profile $p$.
The representation \eqref{eq:Wp_linear} is fundamental to controllability scoring. In particular, the controllability Gramian depends linearly on the allocation vector $p$, which enables the definition of optimization-based controllability scores through suitable scalar criteria applied to $W(p,T)$ \cite{sato2022controllability}.

\subsection{Expected Minimum Energy}

We now derive the weighted average energy controllability objective from the finite-horizon minimum-energy steering problem associated with virtually actuated system \eqref{eq:virtual_system}.
Throughout this subsection, we fix a horizon $T>0$, and assume that $W(p,T)\succ O$, which is equivalent to that system \eqref{eq:virtual_system} is controllable \cite{kalman1960contributions}.

As shown in \cite{kalman1960contributions} and \cite[Theorem~8.4]{lygeros_notes},
 for LTI system \eqref{eq:virtual_system} with virtual input $u(\cdot) \in L^2([0,T];\mathbb{R}^n)$, the minimum input energy required to steer the state from $x_0$ to $x_T$ over $[0,T]$ is
\begin{align}
E_{\min}(x_T\mid x_0;p,T) \label{eq:min_energy}
=
z(T)^\top
W(p,T)^{-1}
z(T),
\end{align}
where
\begin{equation}
z(T):=x_T-\exp(AT)x_0.
\label{eq:z_def}
\end{equation}

To capture which state displacements are regarded as important in the intervention task, we introduce the matrix
\begin{equation}
M(T):=\mathbb{E}\left[z(T)z(T)^\top\right],
\label{eq:M_def}
\end{equation}
assuming that this second moment is well defined.
Here, $\mathbb{E}[\cdot]$ denotes expectation with respect to a prescribed probability distribution of the displacement vector $z(T)$.
This distribution need not be understood as an exact statistical description of the underlying system; rather, especially under coarse dynamical modeling, it serves as a task-dependent approximation that captures the state displacements most relevant to the intervention analysis.
Hence, it is not arbitrary, but should be chosen consistently with available knowledge of the system and the intended control objective.
When $z(T)$ is defined through \eqref{eq:z_def}, such a distribution may be induced from a probabilistic model of $(x_0,x_T)$ reflecting relevant initial states, desirable terminal states, and an assumed dependence structure between them.
The matrix $M(T)$ then serves as a weighting matrix tailored to the intervention task.

Accordingly, we define the expected minimum energy by
\begin{align}
J(p;T)
&:=
\mathbb{E} \left[E_{\min}(x_T\mid x_0;p,T)\right] \\
&= \mathbb{E} \left[z(T)^\top W(p,T)^{-1}z(T)\right] \\
&= \mathrm{tr} \left(W(p,T)^{-1}M(T)\right)
\label{eq:J_def}
\end{align}
for all $p\in\Delta_n$ such that $W(p,T)\succ O$.
Using the definition of the covariance matrix
\[
\mathrm{Cov}(z(T))
:=
\mathbb{E} \left[
\bigl(z(T)-\mathbb{E}[z(T)]\bigr)
\bigl(z(T)-\mathbb{E}[z(T)]\bigr)^\top
\right],
\]
the weighting matrix $M(T)$ in \eqref{eq:M_def} can be expressed as
\begin{align}
M(T)
=
\mathrm{Cov}(z(T))
+
\mathbb{E}[z(T)]\mathbb{E}[z(T)]^\top.
\label{eq_M}
\end{align}
Thus, $J(p;T)$ in \eqref{eq:J_def} can be rewritten as
\begin{align}
    J(p;T) = &{\rm tr} \left(W(p,T)^{-1} {\rm Cov}(z(T))\right) \label{eq:J_decomp} \\
    &+{\bb E}[z(T)]^\top W(p,T)^{-1} {\bb E}[z(T)].
\end{align}
To the best of our knowledge, a general weighted trace-inverse criterion of \eqref{eq:J_def} on the full-state Gramian is not standard in the network controllability literature. Existing formulations have more commonly taken either isotropic forms \cite{pasqualetti2014,summers2015submodularity} or target/output-restricted forms based on output Gramians \cite{Klickstein2017,sato2025tcs}.

Decomposition \eqref{eq:J_decomp} provides a useful interpretation.
The first term
$\mathrm{tr}\left(W(p,T)^{-1}\mathrm{Cov}(z(T))\right)$
represents the contribution of the covariance structure of \(z(T)\).
Indeed, if
$\mathrm{Cov}(z(T))=\sum_{i=1}^n \lambda_i v_i v_i^\top$ is an eigenvalue decomposition with \(\lambda_i\ge 0\), then $\mathrm{tr}\left(W(p,T)^{-1}\mathrm{Cov}(z(T))\right) = \sum_{i=1}^n \lambda_i\, v_i^\top W(p,T)^{-1} v_i$.
Thus, this term can be interpreted as a weighted sum of directional minimum-energy costs, with weights determined by the variances along the principal directions of the displacement distribution.
The second term
$\mathbb{E}[z(T)]^\top W(p,T)^{-1}\mathbb{E}[z(T)]$
is the minimum energy required to achieve the mean displacement itself.
Hence, $J(p;T)$ evaluates the expected minimum control energy associated with the prescribed displacement distribution, decomposing it into a mean component and a variability component.

The following example serves as the basis of the brain-network experiment in Section~\ref{sec:numerical_brain}.

\begin{example} \label{ex1}
A particularly natural and practically important special case is that the terminal state \(x_T\) is prescribed deterministically, whereas the initial state \(x_0\) is random with mean \(\mu_0\) and covariance \(\Sigma_0\):
The initial state is often not known exactly and is therefore modeled probabilistically,
whereas the terminal state is specified as a desired target to be achieved.
In this case,
$\mathbb{E}[z(T)]
=
x_T-\exp(AT)\mu_0$ and 
$\mathrm{Cov}(z(T))
=
\exp(AT)\Sigma_0\exp(A^\top T)$.
\end{example}

The following example shows that
$J(p;T)$ in \eqref{eq:J_def} can be viewed as a natural generalization of a controllability measure based on the inverse controllability Gramian. 

\begin{example}
\label{ex:aecs_special}
Suppose that $x_0=0$ deterministically and that $x_T$ is uniformly distributed on the unit sphere.
Then $z(T)=x_T$, and by rotational symmetry we have
$\mathbb E[x_T]=0$ and
$\mathbb E[x_Tx_T^\top]=\frac1n I_n$.
Hence
$M(T)=\mathbb E[z(T)z(T)^\top]=\frac1n I_n$,
and therefore
$J(p;T)
=
\frac1n \mathrm{tr}\!\left(W(p,T)^{-1}\right)$.
The same optimizer is also obtained when $x_T\sim\mathcal N(0,I_n)$, in which case $M(T)=I_n$.
The function ${\rm tr}(W(p,T)^{-1})$ is a classical controllability metric used in\cite{muller1972, summers2015submodularity}.
\end{example}

\begin{remark}
The present framework does not require the full joint law of $(x_0,x_T)$ to be specified.
Indeed, in light of \eqref{eq:z_def} and \eqref{eq:M_def}, it is sufficient to specify the means and covariance matrices of $x_0$ and $x_T$, together with their cross-covariance.
Therefore, the weighting matrix $M(T)$ can be constructed without explicitly characterizing a complete probabilistic model of $(x_0,x_T)$.
This feature broadens the range of practical applications of the framework.
\end{remark}

\section{Weighted Average Energy Controllability Scoring}

For a fixed horizon \(T>0\), we define the weighted average energy controllability score via the optimization problem
\begin{framed}
\begin{equation}
\begin{aligned}
    &\text{minimize}
    && J(p;T) \\
    &\text{subject to}
    && p \in X_T \cap \Delta_n.
\end{aligned}
\label{eq:opt_problem}
\end{equation}
\vspace{-1em}
\end{framed}
\noindent
Here,
$X_T:=\{p\in\mathbb{R}^n \mid W(p,T)\succ O\}$,
and $J(p;T)$ is defined in \eqref{eq:J_def}, or equivalently by \eqref{eq:J_decomp}.
The constraint $p\in X_T$ ensures that  virtual system \eqref{eq:virtual_system} is controllable, while $p\in\Delta_n$ encodes a relative allocation of intervention resources under a fixed total budget.
Note that the feasible set $X_T\cap \Delta_n$ is nonempty.
Indeed, we have
$
W({\bf 1}/n,T)
=
\frac1n\int_0^T \exp(At) \exp(A^\top t)\,{\rm d}t \succ O$,
and thus ${\bf 1}/n\in X_T\cap\Delta_n$.

Any optimizer of Problem~\eqref{eq:opt_problem} is called a weighted average energy controllability score (W-AECS). It is interpreted as a node-wise, task-dependent intervention-importance profile: a larger value of \(p_i\) assigns a larger share of the virtual actuation budget to node \(i\), indicating that intervention at that node is expected to be more effective for realizing the task-dependent displacement geometry encoded by \(M(T)\) over the horizon \([0,T]\).

Problem~\eqref{eq:opt_problem} is related to approximate A-optimal experimental design (OED): the affine model \eqref{eq:Wp_linear} is analogous to the information-matrix model in approximate OED, and W-AECS corresponds to A-optimality, following the same line of reasoning as in \cite[Section~III]{sato2026OEDrelation}. However, unlike approximate OED \cite{harman2024polytope}, controllability scoring interprets the optimizer itself as a node-wise score, so uniqueness is essential. Although uniqueness of the standard VCS/AECS has been studied in \cite[Section~III-A]{sato2025uniqueness}, the argument below is different and is tailored to the weighted objective \(J(\cdot;T)\). 

\begin{remark}
The objective $J(p;T)$ is closely related to the standard AECS objective as shown in Example~\ref{ex:aecs_special}. In more detail, since
$M(T) \preceq \lambda_{\max}(M(T)) I_n$,
we have
\begin{equation}
J(p;T)
\leq
\lambda_{\max}(M(T))\,\mathrm{tr}\left(W(p,T)^{-1}\right)
\label{eq:J_AECS_bounds}
\end{equation}
for every feasible allocation \(p\in X_T\cap\Delta_n\).
Thus, the standard AECS objective \(\mathrm{tr}(W(p,T)^{-1})\) provides a natural baseline for evaluating \(J(p;T)\).
However, unless \(M(T)\) is proportional to the identity, the optimizer of \(J(p;T)\) need not coincide with that of the standard AECS objective. In Section~\ref{sec:numerical_brain}, we show numerically that task-dependent choices of \(M(T)\) can lead to W-AECS scorings that differ substantially from those of the standard AECS.
\end{remark}

\subsection{Existence of an optimal solution}

We first establish existence of an optimal solution to Problem~\eqref{eq:opt_problem}. Although \(\Delta_n\) is compact, the feasible set \(X_T\cap\Delta_n\) is not necessarily closed. The following lemma shows that, when \(M(T)\succ O\), the objective blows up near the singular boundary of \(X_T\), and hence the minimum is attained.

\begin{lemma}
\label{lem:existence}
Fix \(T>0\) and assume that \(M(T)\succ O\).
Then Problem~\eqref{eq:opt_problem} admits an optimal solution.
\end{lemma}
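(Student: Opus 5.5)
The plan is a standard Weierstrass-type argument on a sublevel set. I will show that $J(\cdot;T)$ is continuous on the relatively open feasible set $X_T\cap\Delta_n$ and blows up as $W(p,T)$ approaches singularity. Then a suitable sublevel set is compact and contained in the feasible set, and the minimum is attained there.

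First, continuity. By \eqref{eq:Wp_linear}, $p\mapsto W(p,T)$ is affine, hence continuous. Matrix inversion is continuous on the open cone of positive definite matrices. Therefore $J(p;T)=\tr(W(p,T)^{-1}M(T))$ is continuous on $X_T\cap\Delta_n$. Moreover, $X_T$ is open in $\Real^n$, because $\lambda_{\min}(W(p,T))$ is continuous in $p$.

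Second, the lower bound. For feasible $p$, since $M(T)\succ O$ and $W(p,T)^{-1}\succ O$,
\begin{equation}
J(p;T)\ \ge\ \lambda_{\min}(M(T))\,\tr\bigl(W(p,T)^{-1}\bigr)\ \ge\ \frac{\lambda_{\min}(M(T))}{\lambda_{\min}(W(p,T))}.
\end{equation}
The first inequality follows from $\tr(W^{-1}M)=\tr(W^{-1/2}MW^{-1/2})\ge\lambda_{\min}(M)\tr(W^{-1})$. The second holds because $\tr(W^{-1})\ge\lambda_{\max}(W^{-1})=1/\lambda_{\min}(W)$.

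Third, compactness of a sublevel set. Let $c:=J(\bm{1}/n;T)<\infty$, which is finite since $\bm{1}/n$ is feasible. Define $S:=\{p\in X_T\cap\Delta_n\mid J(p;T)\le c\}$, which is nonempty. By the bound above, every $p\in S$ satisfies $\lambda_{\min}(W(p,T))\ge\lambda_{\min}(M(T))/c=:\delta>0$. Hence
\begin{equation}
S=\{p\in\Delta_n\mid \lambda_{\min}(W(p,T))\ge\delta,\ J(p;T)\le c\}.
\end{equation}
The set $\{p\in\Delta_n\mid\lambda_{\min}(W(p,T))\ge\delta\}$ is closed, since $\lambda_{\min}$ is continuous, and it is bounded as a subset of $\Delta_n$. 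So it is compact, and it lies inside $X_T$. The function $J$ is continuous on this compact set, so $S$ is closed in it and therefore compact.

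Conclusion. By Weierstrass, $J(\cdot;T)$ attains its minimum on $S$. Any point outside $S$ has a value exceeding $c$, which is at least this minimum. Hence the minimizer over $S$ is an optimal solution of Problem~\eqref{eq:opt_problem}.

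The only genuinely delicate step is the coercivity bound in the second step. This is exactly where $M(T)\succ O$ is needed. If $M(T)$ were only semidefinite, $J$ could remain bounded while $W(p,T)$ degenerates along directions in $\ker M(T)$, and the infimum could escape to the boundary of $X_T$.
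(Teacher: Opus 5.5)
Your proof is correct and follows essentially the same approach as the paper: you restrict to a sublevel set, use the coercivity bound $J(p;T)\ge \lambda_{\min}(M(T))/\lambda_{\min}(W(p,T))$ (which is where $M(T)\succ O$ enters), and apply Weierstrass. The only difference is cosmetic. You prove compactness of $S$ through the explicit uniform bound $\lambda_{\min}(W(p,T))\ge\delta$ on $S$, whereas the paper proves closedness of $S$ in $\Delta_n$ by a sequential contradiction argument.
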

\begin{proof}
Take any $\hat p\in X_T\cap \Delta_n$ and consider the sublevel set
$S:=
\{p\in X_T\cap\Delta_n \mid J(p;T)\le J(\hat p;T)\}$.

We prove $S$ is compact in $X_T\cap\Delta_n$.
Because $\Delta_n$ is compact, it suffices to show that $S$ is closed in $\Delta_n$.
Let $\{p_k\}\subset S$ and $p_k\to \bar p\in\Delta_n$.
Suppose $\bar p\notin X_T$.
Then $W(\bar p,T)\succeq O$ is singular, while $W(p_k,T)\succ O$ and
$W(p_k,T)\to W(\bar p,T)$, so
$\lambda_{\min}(W(p_k,T))\to 0$.
Since $M(T)\succ O$, there exists $m>0$ such that
$M(T)\succeq m I_n$.
Hence
$J(p_k;T)
=
{\rm tr} \left(W(p_k,T)^{-1}M(T)\right) \geq
m {\rm tr} \left(W(p_k,T)^{-1}\right)
\geq
m \frac{1}{\lambda_{\min}(W(p_k,T))} \to \infty$,
a contradiction to $J(p_k;T)\le J(\hat p;T)$.
Thus $\bar p\in X_T$.
By continuity of $J(\cdot;T)$ on $X_T$, we also have
$J(\bar p;T)=\lim_{k\to\infty}J(p_k;T)\le J(\hat p;T)$,
so $\bar p\in S$.
Therefore \(S\) is compact in $X_T\cap\Delta_n$. 

Since \(J(\cdot;T)\) is continuous on \(X_T\), it is continuous on \(S\subset X_T\). Hence, by the Weierstrass theorem, \(J(\cdot;T)\) attains its minimum on \(S\) \cite[Proposition~A.8]{bertsekas2016nonlinear}. Moreover, by the definition of \(S\), minimizing \(J(\cdot;T)\) over \(S\) is equivalent to minimizing it over \(X_T\cap\Delta_n\). Thus Problem~\eqref{eq:opt_problem} admits an optimal solution.
\end{proof}

\subsection{Strict convexity}

To clarify the strict convexity of \(J(\cdot;T)\), we examine how the controllability Gramian varies with the allocation vector \(p\).
For a direction \(d\in\mathbb{R}^n\), define
$\Delta W(d):=\sum_{i=1}^n d_i W_i(T)$.
Since \(W(p,T)\) depends linearly on \(p\) in \eqref{eq:Wp_linear}, its directional derivative along \(d\) is given by
${\rm D}W(p,T)[d]=\Delta W(d)$.

\begin{lemma}[Directional derivatives]
\label{prop:derivatives}
Fix $T>0$ and let $p\in X_T\cap\Delta_n$, and write $W:=W(p,T)$.
Then
\begin{align}
&{\rm D}J(p;T)[d]
=
-\mathrm{tr} \left(W^{-1}\Delta W(d)\,W^{-1}M(T) \right),
\label{eq:DJ}
\\
&{\rm D}^2J(p;T)[d,d] \label{eq:D2J} \\
&=
2\mathrm{tr}\left(W^{-1}\Delta W(d) W^{-1}\Delta W(d) W^{-1}M(T)\right).
\end{align}
\end{lemma}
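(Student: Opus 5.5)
The plan is to reduce both formulas to the standard derivative of matrix inversion along an affine curve in $p$. Since $p\in X_T$ and $X_T$ is open ($W(\cdot,T)$ is continuous and the set of positive definite matrices is open), for any $d\in\mathbb{R}^n$ we have $p+td\in X_T$ for all sufficiently small $|t|$. By \eqref{eq:Wp_linear}, $W(p+td,T)=W+t\,\Delta W(d)$. Hence we only need to differentiate the scalar function $\phi(t):=\mathrm{tr}\bigl((W+t\,\Delta W(d))^{-1}M(T)\bigr)$ at $t=0$ once and twice. The simplex constraint plays no role here: the derivatives are taken in $\mathbb{R}^n$, and the restriction to feasible directions, e.g.\ ${\bf 1}^\top d=0$, matters only later.

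First, write $V(t):=(W+t\,\Delta W(d))^{-1}$. Differentiating the identity $V(t)(W+t\,\Delta W(d))=I_n$ gives $V'(t)=-V(t)\,\Delta W(d)\,V(t)$. Applying the product rule to this expression gives $V''(t)=2V(t)\,\Delta W(d)\,V(t)\,\Delta W(d)\,V(t)$. Alternatively, one can use the Neumann expansion $(W+tE)^{-1}=W^{-1}-tW^{-1}EW^{-1}+t^2W^{-1}EW^{-1}EW^{-1}+O(t^3)$, which is valid for $|t|\,\|W^{-1}E\|<1$; it yields both coefficients at once and shows that the map is smooth, indeed real-analytic, near $t=0$.

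Next, trace is linear and $M(T)$ does not depend on $p$. Therefore $\phi'(0)=\mathrm{tr}(V'(0)M(T))$ and $\phi''(0)=\mathrm{tr}(V''(0)M(T))$. Substituting $V(0)=W^{-1}$ gives \eqref{eq:DJ} and \eqref{eq:D2J} directly. For the second derivative, I will note that ${\rm D}^2J(p;T)[d,d]$ is understood as $\frac{{\rm d}^2}{{\rm d}t^2}J(p+td;T)\big|_{t=0}$. This coincides with the bilinear Hessian form evaluated at $(d,d)$, because $J(\cdot;T)$ is $C^2$ on the open set $X_T$: it is the composition of the smooth map $p\mapsto W(p,T)$, smooth matrix inversion on the positive definite cone, and a linear functional.

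There is no real obstacle. The only points needing care are the justification that $p+td$ stays in $X_T$ for small $t$, which follows from openness as noted above, and keeping the order of the non-commuting matrix factors correct. The formula can also be rewritten symmetrically for later use in the strict convexity analysis: $2\,\mathrm{tr}(G^\top W^{-1}G)$ with $G:=W^{-1/2}\Delta W(d)\,W^{-1}M(T)^{1/2}$. This form is nonnegative whenever $M(T)\succeq O$, but the lemma itself requires only the direct computation.
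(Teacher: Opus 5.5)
Your proposal is correct and takes the same route as the paper: the paper applies ${\rm D}(W^{-1})[H]=-W^{-1}HW^{-1}$ with $H=\Delta W(d)$ and then differentiates once more, and your remarks on the openness of $X_T$ and $C^2$ regularity make explicit what the paper leaves implicit. One slip in your optional closing aside, which does not affect the lemma: with $G:=W^{-1/2}\Delta W(d)\,W^{-1}M(T)^{1/2}$ you get $2\,\mathrm{tr}(G^\top W^{-1}G)=2\,\mathrm{tr}(W^{-1}\Delta W(d)\,W^{-2}\Delta W(d)\,W^{-1}M(T))$, which carries an extra $W^{-1}$, so the correct symmetric form is $2\,\mathrm{tr}(G^\top G)=2\|G\|_{\rm F}^2$.
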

\begin{proof}
Eq.~\eqref{eq:DJ} is obtained by applying the identity
${\rm D}(W^{-1})[H]=-W^{-1}HW^{-1}$
to \(W=W(p,T)\) with \(H=\Delta W(d)\).
Differentiating once more in the same direction yields
 \eqref{eq:D2J}. 
 \end{proof}

Using Lemma~\ref{prop:derivatives}, we obtain the strict convexity of $J(\cdot; T)$ under the following assumption.

\begin{assumption} \label{injective_con}
    $\Delta W(d)\neq O$ for every nonzero tangent direction $d\in\mathbb{R}^n$ satisfying $\mathbf{1}^\top d=0$.
\end{assumption}

\begin{theorem}
\label{thm_unique}
Suppose that $M(T)\succ O$ and Assumption~\ref{injective_con} hold.
Then the objective function $J(\cdot;T)$ in Problem~\eqref{eq:opt_problem} is strictly convex on $X_T\cap\Delta_n$. \end{theorem}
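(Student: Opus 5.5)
The plan is to prove strict convexity by showing that the second directional derivative in Lemma~\ref{prop:derivatives} is strictly positive for every admissible nonzero direction. Since $X_T\cap\Delta_n$ is convex, this suffices. Indeed, $X_T$ is convex because $p\mapsto W(p,T)$ is linear and the positive definite cone is convex, and $\Delta_n$ is convex. For two distinct points $p,q\in X_T\cap\Delta_n$, the direction $d:=q-p$ is nonzero and satisfies $\mathbf{1}^\top d=0$. The whole segment $p+t d$, $t\in[0,1]$, stays in the feasible set, and $\phi(t):=J(p+td;T)$ is twice continuously differentiable there. By Lemma~\ref{prop:derivatives}, $\phi''(t)={\rm D}^2J(p+td;T)[d,d]$. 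If we show $\phi''(t)>0$ on $[0,1]$, then $\phi$ is strictly convex on $[0,1]$. Since $p,q$ were arbitrary, this gives strict convexity of $J(\cdot;T)$.

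The key step is a positivity argument for the expression \eqref{eq:D2J}. Write $W:=W(p+td,T)\succ O$ and $H:=\Delta W(d)$, which is symmetric. Set $K:=W^{-1}HW^{-1/2}$. By cyclicity of the trace,
\[
\mathrm{tr}\left(W^{-1}HW^{-1}HW^{-1}M\right)=\mathrm{tr}\left(W^{-1/2}HW^{-1}\,M\,W^{-1}HW^{-1/2}\right)=\mathrm{tr}\left(K^\top M K\right),
\]
because $K^\top=W^{-1/2}HW^{-1}$.

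Next use $M(T)\succeq mI_n$ with $m:=\lambda_{\min}(M(T))>0$. This gives
\[
\mathrm{tr}(K^\top MK)\ge m\,\mathrm{tr}(K^\top K)=m\|K\|_F^2.
\]
Here $K=0$ if and only if $H=O$, since $W^{-1}$ and $W^{-1/2}$ are invertible. By Assumption~\ref{injective_con}, $H=\Delta W(d)\neq O$ for the nonzero tangent direction $d$. Hence ${\rm D}^2J[d,d]\ge 2m\|K\|_F^2>0$.

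The main point needing care is the passage from a positive second derivative along every segment to strict convexity on a set that is not closed. This works because only segments between feasible points are used, and such segments lie entirely in $X_T\cap\Delta_n$. I would also check that $M(T)$ is symmetric. It is, being a second-moment matrix, so the trace rewriting with $M$ is valid.
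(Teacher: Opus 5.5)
Your proof is correct and follows the paper's argument. You restrict $J(\cdot;T)$ to the segment between two distinct feasible points, use the second directional derivative from Lemma~\ref{prop:derivatives}, and invoke Assumption~\ref{injective_con} to get $\Delta W(d)\neq O$. The only difference is the positivity step, and it is cosmetic: you bound $\mathrm{tr}(K^\top M(T)K)\ge \lambda_{\min}(M(T))\|K\|_F^2$ with $K=W^{-1}\Delta W(d)W^{-1/2}$, whereas the paper writes $\phi''(t)=2\,\mathrm{tr}(B_t^{1/2}A_t^2B_t^{1/2})$ with $A_t=W^{-1/2}\Delta W(d)W^{-1/2}$ and $B_t=W^{-1/2}M(T)W^{-1/2}$. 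Both routes are valid, and yours additionally gives an explicit lower bound on $\phi''(t)$.
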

\begin{proof}
The argument is analogous to the analysis in \cite[Section III-A]{sato2025uniqueness}, but here it is more direct to use the second directional derivative \eqref{eq:D2J} of $J(\cdot;T)$.

Let $p,q\in X_T\cap\Delta_n$ with $p\neq q$, and define
\[
\phi(t):=J((1-t)p+tq;T), \qquad t\in[0,1].
\]
Since $X_T\cap\Delta_n$ is convex, the entire segment $(1-t)p+tq$ is feasible for all $t\in[0,1]$.

Set
$d:=q-p$.
Then $d\neq 0$ and
$\mathbf{1}^\top d=\mathbf{1}^\top q-\mathbf{1}^\top p=0$,
and thus Assumption~\ref{injective_con} yields
$\Delta W(d)=\sum_{i=1}^n d_i W_i(T)\neq O$.

For each $t\in[0,1]$, write
$W(t):=W((1-t)p+tq,T)\succ O$.
By Lemma~\ref{prop:derivatives},
\begin{align}
&\phi''(t) \label{eq:phi_second} \\
&=
2\operatorname{tr}\!\left(
W(t)^{-1}\Delta W(d)\,W(t)^{-1}\Delta W(d)\,W(t)^{-1}M(T)
\right).
\end{align}
Now define
$A_t :=W(t)^{-1/2}\Delta W(d)\,W(t)^{-1/2}$ and 
$B_t :=W(t)^{-1/2}M(T)W(t)^{-1/2}$.
Then $A_t$ is symmetric and $B_t\succ O$ because $M(T)\succ O$ and $W(t)\succ O$. Hence
$\phi''(t)=2\operatorname{tr}(A_t^2B_t) = 2{\rm tr} \left( B_t^{1/2}A_t^2B_t^{1/2}\right)$.
Since $\Delta W(d)\neq O$, we have $A_t\neq O$, so $A_t^2\succeq O$ is nonzero. Because $B_t\succ O$, the matrix
$B_t^{1/2}A_t^2B_t^{1/2}$
is positive semidefinite and nonzero, and therefore has strictly positive trace. Thus
$\phi''(t)>0$
for all $t\in[0,1]$.

Hence $\phi$ is strictly convex on $[0,1]$, and thus $J(\cdot;T)$ is strictly convex on $X_T\cap\Delta_n$.
\end{proof}

The assumption \(M(T)\succ O\) in Theorem~\ref{thm_unique} is not restrictive and is satisfied in many relevant settings. For instance, in the setting of Example~\ref{ex1}, if 
$\Sigma_0\succ O$, then $M(T)\succ O$ holds for all $T>0$.

Assumption~\ref{injective_con} can be characterized more concretely. 
Indeed, for any $d\in\mathbb{R}^n$ satisfying $\mathbf{1}^\top d=0$, we have
$d_n=-\sum_{i=1}^{n-1}d_i$,
and hence
\[
\Delta W(d)
=
\sum_{i=1}^n d_i W_i(T)
=
\sum_{i=1}^{n-1} d_i\bigl(W_i(T)-W_n(T)\bigr).
\]
Therefore, Assumption~\ref{injective_con}
is equivalent to saying that the only coefficients
$(d_1,\dots,d_{n-1})$ satisfying
$\sum_{i=1}^{n-1} d_i\left(W_i(T)-W_n(T)\right)=O$    
are $d_1=\cdots=d_{n-1}=0$.
This is precisely the linear independence of
\[
W_1(T)-W_n(T),\dots,W_{n-1}(T)-W_n(T).
\]
Using this fact, we obtain the following theorem.

\begin{theorem}
\label{thm:generic_injectivity}
For any matrix $A\in\mathbb{R}^{n\times n}$, Assumption~\ref{injective_con} holds for almost all $T>0$.
\end{theorem}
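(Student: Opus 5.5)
Our plan is to use real-analyticity in $T$. By the characterization just before the statement, Assumption~\ref{injective_con} at a given $T$ is equivalent to linear independence of the $n-1$ symmetric matrices $W_i(T)-W_n(T)$, $i=1,\dots,n-1$. Vectorize these matrices (for instance, stack their upper-triangular parts) into columns of an $\tfrac{n(n+1)}{2}\times(n-1)$ matrix $G(T)$. Then linear independence holds iff some $(n-1)\times(n-1)$ minor of $G(T)$ is nonzero. Equivalently, $g(T):=\det\bigl(G(T)^\top G(T)\bigr)\neq 0$, i.e., $g(T)$ is the Gram determinant $\det\bigl[\tr\bigl((W_i(T)-W_n(T))(W_j(T)-W_n(T))\bigr)\bigr]_{i,j=1}^{n-1}$.

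Each entry of $W_i(T)=\int_0^T \exp(At)e_ie_i^\top\exp(A^\top t)\,{\rm d}t$ is an entire function of $T\in\mathbb{R}$, since it is the integral of an entire integrand. Hence $g$ is real-analytic (indeed entire) on $\mathbb{R}$. A real-analytic function on the connected set $(0,\infty)$ is either identically zero or has isolated zeros, and in the latter case its zero set is countable and so of measure zero. The theorem therefore reduces to showing $g\not\equiv 0$. It suffices to exhibit this on any interval, and the natural place to look is $T\to 0^+$, using Taylor expansion.

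The main obstacle is this nondegeneracy step. We expand
\[
W_i(T)=\sum_{k\ge 0}\frac{T^{k+1}}{(k+1)!}\sum_{j=0}^{k}\binom{k}{j}A^j e_ie_i^\top (A^\top)^{k-j}.
\]
Its leading term is $T\,e_ie_i^\top$. Consequently
\[
W_i(T)-W_n(T)=T\bigl(e_ie_i^\top-e_ne_n^\top\bigr)+O(T^2).
\]
The matrices $e_ie_i^\top-e_ne_n^\top$, $i=1,\dots,n-1$, are linearly independent, since they have disjoint diagonal supports apart from the common $(n,n)$ entry, and their diagonals $e_i-e_n$ are independent. Hence $T^{-1}G(T)\to G_0$ with $G_0$ of full column rank $n-1$. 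It follows that $T^{-2(n-1)}g(T)\to\det(G_0^\top G_0)>0$, so $g(T)>0$ for all sufficiently small $T>0$ and $g\not\equiv 0$. Alternatively, one can avoid limits by noting that $g(T)=T^{2(n-1)}h(T)$ with $h$ analytic and $h(0)=\det(G_0^\top G_0)\neq 0$.

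Combining the steps, the zero set of $g$ in $(0,\infty)$ is a discrete set. Assumption~\ref{injective_con} therefore holds for all $T>0$ outside a countable set with no finite accumulation point in $(0,\infty)$, and in particular for almost all $T>0$. This holds for every $A\in\mathbb{R}^{n\times n}$, since $A$ entered only through the higher-order terms of the expansion.
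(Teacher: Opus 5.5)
Your proposal is correct and follows essentially the same route as the paper. Both arguments use the Gram determinant of $W_i(T)-W_n(T)$, its real analyticity in $T$, and the small-$T$ limit $T^{-2(n-1)}\det G(T)\to\det G^0>0$ obtained from $W_i(T)=T\,e_ie_i^\top+o(T)$, which shows the determinant is not identically zero. Your conclusion that the exceptional set is discrete in $(0,\infty)$ is a slight sharpening of the paper's measure-zero statement. One small imprecision: if you stack only upper-triangular parts, $G(T)^\top G(T)$ is a Gram matrix for a different inner product than the trace one, so the ``i.e.'' is not literal. This does not affect the argument, since either determinant vanishes exactly when the matrices are linearly dependent.
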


\begin{proof}
Define the Gram matrix
$G(T) = (G_{ij}(T))\in\mathbb{R}^{(n-1)\times(n-1)}$ as
\begin{align}
G_{ij}(T)
:=
\mathrm{tr} \left(
\left(W_i(T)-W_n(T)\right)\left(W_j(T)-W_n(T)\right)
\right).
\end{align}
Then Assumption~\ref{injective_con} is equivalent to
\begin{align}
\det G(T)\neq 0, \label{det_G}
\end{align}
because
$c^\top G(T) c = \left\| \sum_{i=1}^{n-1} c_i (W_i(T)-W_n(T))\right\|^2_{\rm F}$ for any $c\in {\bb R}^{n-1}$, where $\|\cdot\|_{\rm F}$ denotes the Frobenius norm.

Since $W_i(T)$ is defined as \eqref{eq:Wi_def},
each entry of $W_i(T)$ is real analytic in $T$, and hence so is each entry of $G(T)$. Therefore, $\det G(T)$ is real analytic in $T$.
Moreover, we have
\begin{align}
W_i'(T)=\exp(AT)e_ie_i^\top \exp(A^\top T),
\quad
W_i(0)=O.    
\end{align}
Hence 
$W_i'(0)=e_ie_i^\top=:E_{ii}$,
and therefore as $T\downarrow 0$,
$W_i(T)=T E_{ii}+o(T)$,
which implies that
\[
\frac{1}{T}\bigl(W_i(T)-W_n(T)\bigr)\to E_{ii}-E_{nn}
\qquad (T\downarrow 0).
\]
Therefore, as $T\downarrow 0$,
\begin{align}
    \frac{1}{T^2}G_{ij}(T)
&=
\mathrm{tr} \left(
\frac{W_i(T)-W_n(T)}{T}\,
\frac{W_j(T)-W_n(T)}{T}
\right) \\
&\to
\mathrm{tr} \left((E_{ii}-E_{nn})(E_{jj}-E_{nn})\right) =: G^0_{ij}.
\end{align}
where $G^0:=(G^0_{ij})$ is the Gram matrix of the linear independent matrices
$E_{11}-E_{nn},\dots,E_{n-1,n-1}-E_{nn}$.
Hence $G^0$ is positive definite, and in particular $\det G^0\neq 0$. By continuity of the determinant,
$\det \left(\frac{1}{T^2}G(T)\right)\neq 0$
for all sufficiently small $T>0$. Equivalently, \eqref{det_G} holds for all sufficiently small $T>0$. Thus $\det G(T)$ is not identically zero.
By the standard zero-set property of real analytic functions, its zero set has Lebesgue measure zero \cite{mityagin2020zero}.
This completes the proof.
\end{proof}

\subsection{Uniqueness}

Theorem~\ref{thm:generic_injectivity} shows that Assumption~\ref{injective_con} is generically satisfied for almost all $T>0$.
Combining this fact with Lemma~\ref{lem:existence} and Theorem~\ref{thm_unique}, we obtain generic uniqueness of the optimizer whenever $M(T)\succ O$.

\begin{corollary} \label{cor:generic_unique} 
Suppose that $M(T)$ in \eqref{eq_M} is positive definite.
    Then, for any matrix $A\in {\bb R}^{n\times n}$ and almost all $T>0$, Problem~\eqref{eq:opt_problem} admits a unique optimal solution.
\end{corollary}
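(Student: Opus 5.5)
The plan is to combine the three preceding results. Fix \(A\) and let \(\mathcal{T}_A\subset(0,\infty)\) be the set of horizons at which Assumption~\ref{injective_con} fails. By Theorem~\ref{thm:generic_injectivity}, \(\mathcal{T}_A\) has Lebesgue measure zero. Every statement below is then made for \(T\notin\mathcal{T}_A\), which is what ``almost all \(T>0\)'' means. At such \(T\) we take \(M(T)\succ O\) as in the hypothesis. If \(M\) is allowed to depend on \(T\) so that it is positive definite only on some set of horizons, the conclusion holds on that set minus the null set \(\mathcal{T}_A\).

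\textbf{Existence.} For a fixed admissible \(T\), Lemma~\ref{lem:existence} applies directly because \(M(T)\succ O\). Hence Problem~\eqref{eq:opt_problem} has at least one minimizer \(p^\star\in X_T\cap\Delta_n\).

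\textbf{Uniqueness.} Since \(T\notin\mathcal{T}_A\), Assumption~\ref{injective_con} holds, so Theorem~\ref{thm_unique} shows that \(J(\cdot;T)\) is strictly convex on \(X_T\cap\Delta_n\). This set is convex: \(\Delta_n\) is a simplex, and \(X_T\) is convex because \(p\mapsto W(p,T)\) is linear and the positive definite cone is convex. Now suppose \(p^\star\neq q^\star\) are both minimizers with common value \(J^\star\). The midpoint \(\tfrac12(p^\star+q^\star)\) is feasible, and strict convexity gives
\[
J\!\left(\tfrac12(p^\star+q^\star);T\right)<\tfrac12 J(p^\star;T)+\tfrac12 J(q^\star;T)=J^\star,
\]
which contradicts minimality. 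So the minimizer is unique.

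The only point needing care is the quantifier structure. The exceptional null set comes from Theorem~\ref{thm:generic_injectivity} alone and depends only on \(A\), not on \(M\). The positive-definiteness of \(M(T)\) is a separate pointwise hypothesis. Keeping these two conditions apart is enough, and no further estimates are needed.
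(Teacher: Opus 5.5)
Your proposal is correct and follows the paper's proof: you discard the null set of horizons from Theorem~\ref{thm:generic_injectivity}, obtain existence from Lemma~\ref{lem:existence}, and obtain uniqueness from the strict convexity in Theorem~\ref{thm_unique}. The only difference is that you write out the midpoint argument and the convexity of $X_T\cap\Delta_n$ explicitly, whereas the paper cites the standard fact that a strictly convex function has at most one minimizer.
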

\begin{proof}
By Theorem~\ref{thm:generic_injectivity}, Assumption~\ref{injective_con} holds for almost all \(T>0\). Fix such a \(T>0\). Then Lemma~\ref{lem:existence} guarantees existence of an optimal solution to Problem~\eqref{eq:opt_problem}.
By Theorem~\ref{thm_unique}, \(J(\cdot;T)\) is strictly convex on \(X_T\cap\Delta_n\). Therefore, the optimal solution is unique \cite[Proposition~1.1.2]{bertsekas2016nonlinear}.
\end{proof}

Corollary~\ref{cor:generic_unique} guarantees the uniqueness of W-AECS.
This uniqueness is essential for interpreting W-AECS as a task-dependent centrality measure for network system \eqref{eq:autonomous}, because it ensures that each node is assigned a well-defined score that can be meaningfully compared across nodes.
Without uniqueness, multiple optimal allocations could yield different score assignments, making interpretation and comparison ambiguous.

\subsection{Optimization algorithm for W-AECS}

Optimization problem~\eqref{eq:opt_problem} can be solved in essentially the same manner as the projected-gradient schemes used for VCS and AECS in \cite{sato2022controllability,sato2025uniqueness}.
For completeness, we briefly summarize a projected-gradient method to obtain W-AECS. 

In the projected-gradient method, we consider the following iteration:
$p^{(k+1)}
=
\Pi_{\Delta_n} \left(
p^{(k)}-\alpha_k \nabla J(p^{(k)};T)
\right)$,
where \(\Pi_{\Delta_n}\) denotes the Euclidean projection onto the simplex
\(\Delta_n\), and the step size \(\alpha_k>0\) is chosen by a backtracking
Armijo rule along the projection arc.
Here, $\nabla J(p;T):=
\left(
\frac{\partial J}{\partial p_i}(p;T)
\right)$
denotes
the gradient
of \(J(\cdot;T)\), which is given by
\begin{align}
\frac{\partial J}{\partial p_i}(p;T)
&= {\rm D}J(p;T)[e_i] \\
&=-\mathrm{tr} \left(
W(p,T)^{-1}W_i(T)\,W(p,T)^{-1}M(T)
\right).
\label{eq:grad_i}
\end{align}

This projected-gradient scheme provides a practical method for computing W-AECS.
Any convergent subsequence of the generated sequence has an optimal solution of Problem~\eqref{eq:opt_problem} as its limit.
Moreover, by Corollary~\ref{cor:generic_unique}, for almost all $T>0$ the optimizer is unique whenever $M(T)\succ O$.
Hence, in such cases, the limit is uniquely determined and coincides with W-AECS.
A more detailed convergence analysis can be carried out in essentially the same manner as in \cite[Section~III-D]{sato2025uniqueness}.

\section{Numerical experiment on brain networks}
\label{sec:numerical_brain}

In this section, we present a numerical experiment using the same brain-network dataset as that used in \cite[Section~IV]{sato2025uniqueness}.
The dataset is publicly available online\footnote{Available: \url{https://doi.org/10.17605/OSF.IO/YW5VF}} and is described in \cite{Skoch2022}.

\subsection{Purpose of the experiment}
\label{subsec:purpose_brain}

A central motivation for the present experiment is the marked discrepancy between the standard average energy controllability score (AECS) and the volumetric controllability score (VCS) observed in \cite[Section~IV]{sato2025uniqueness}.
In that study, AECS tended to assign high importance to brain regions associated mainly with cognitive and motor functions, whereas VCS tended to highlight regions related more strongly to sensory and emotional processing.
Therefore, they exhibit qualitatively different evaluation tendencies on the same brain-network dataset.

Unlike the standard AECS, the proposed W-AECS depends explicitly on the state transition to be emphasized through the weighting matrix \(M(T)\).
This suggests the possibility that, by choosing a suitable transition of interest, W-AECS may produce a scoring substantially different from that of the isotropic AECS.
A particularly interesting question is whether W-AECS can assign relatively high importance to regions that were previously emphasized by VCS.

Accordingly, the purpose of the present experiment is not simply to obtain another controllability-based scoring, but to examine whether a transition-dependent energy-based score can partially reproduce a VCS-like evaluation tendency, despite the fact that standard AECS and VCS are known to behave very differently.
If this occurs, it would indicate that the controllability-based importance of brain regions depends not only on the underlying network structure and on the choice of scalar criterion, but also crucially on which state-transition directions are regarded as important from the viewpoint of intervention.

\subsection{Dataset, network dynamics, and time horizon}
\label{subsec:dataset_dynamics_brain}

We consider the same structural brain-network dataset as in \cite[Section~IV]{sato2025uniqueness}.
For each subject, the dataset provides a directed weighted connectivity matrix over \(n=90\) brain regions of interest (ROIs), based on the AAL atlas.
From this connectivity matrix, we construct the corresponding graph Laplacian \(L\), and define the network dynamics by
\begin{equation}
\dot{x}(t)=Ax(t),
\qquad
A=-L.
\label{eq:brain_dynamics}
\end{equation}

\noindent
Here, \(x(t)\in\mathbb{R}^{90}\) is interpreted as a coarse macroscopic brain state over the 90 ROIs.
It is not assumed to coincide directly with an observed BOLD time series, but is instead regarded as an abstract dynamical variable associated with the structural brain network.
This abstraction is appropriate because our aim is not to reproduce measured time series themselves, but to examine how controllability-based intervention scores depend on the direction and weighting of prescribed state transitions on the network.
This abstraction is also supported by prior work on macroscale brain-network modeling: Abdelnour \emph{et al.} \cite{abdelnour2014network} showed that a linear network-diffusion model defined on the structural connectivity graph can successfully predict resting-state functional connectivity, in some cases more accurately than several alternative models. This suggests that, even if microscopic neural activity is highly nonlinear, a linear graph-Laplacian-type description can still be meaningful at the coarse ROI level considered here.

Because W-AECS is inherently a finite-horizon quantity, the time horizon \(T\) must be specified explicitly.
To remain consistent with \cite[Section~IV]{sato2025uniqueness}, we used the same reference horizon
$T=100$.

\subsection{Task specification: transition from a rest-related state to a VCS-favored state}
\label{subsec:task_specification_brain}

A key step in the proposed experiment is to define an intervention task that is both interpretable and capable of producing a non-isotropic weighting structure.
Rather than adopting the isotropic choice \(M(T)=I_{90}\), we construct \(M(T)\) from uncertainty in the initial state together with a prescribed terminal target.
The task considered here is designed so that the prescribed transition emphasizes brain regions that are closer to those favored by VCS in \cite{sato2025uniqueness}, rather than those typically emphasized by the standard isotropic AECS.
More specifically, we construct a transition from a rest-related proxy state toward a VCS-favored proxy state.

\subsubsection{ROI classification used in the experiment}
\label{subsubsec:roi_classification_brain}

Because the AAL atlas is anatomical rather than functional, the classification adopted here should be understood as an interpretable proxy for rest-related and VCS-favored subnetworks, rather than as an exact functional partition.

The rest-related set is defined as
\begin{equation}
S_{\mathrm{rest}}
=
\{23,24,35,36,65,66\},
\label{eq:Srest}
\end{equation}
corresponding to
\begin{itemize}
\item 23,\,24: Left/Right Superior Medial Gyrus,
\item 35,\,36: Left/Right Posterior Cingulum,
\item 65,\,66: Left/Right Angular Gyrus.
\end{itemize}
These AAL regions were not chosen because they provide an exact functional parcellation of the default mode network, but because they offer anatomically interpretable proxies for three canonical DMN components frequently identified in the literature: a medial prefrontal component, a posterior midline component centered on the posterior cingulate/precuneus, and a lateral parietal component including the angular gyrus \cite{buckner2008brain,andrews2014default,menon2023dmn}.

The target set is defined as
\begin{equation}
S_{\mathrm{VCS}}
=
\{22,41,42,79,80\},
\label{eq:SVCS}
\end{equation}
corresponding to
\begin{itemize}
\item 22: Right Olfactory Cortex,
\item 41,\,42: Left/Right Amygdala,
\item 79,\,80: Left/Right Heschl's Gyrus.
\end{itemize}
These regions form a compact proxy for the sensory/emotional set emphasized by VCS in \cite[Table~V]{sato2025uniqueness}.
Thus, the target set is chosen not to mimic the regions favored by the standard AECS, but to test whether a suitably designed transition can induce a W-AECS ranking closer to that of VCS.

\subsubsection{Construction of the initial and terminal states}
\label{subsubsec:task_construction_brain}

Based on these ROI sets, we considered a random initial state and a deterministic terminal state of the form
\begin{equation}
x_0 \sim \mathcal{N}(\mu_0,\Sigma_0),
\qquad
x_T \in \mathbb{R}^{90},
\label{eq:x0_random_xT_deterministic_brain}
\end{equation}
where \(\mu_0\), \(\Sigma_0\), and \(x_T\) are specified below.
This asymmetric specification is natural in intervention problems: the initial state is often not known exactly and is therefore modeled probabilistically, whereas the terminal state is prescribed as a desired target to be achieved.
In this case, \({\bb E}[z(T)]\) and \({\rm Cov}(z(T))\) are given in Example~\ref{ex1}.

We defined the initial mean vector \(\mu_0\in\mathbb{R}^{90}\) and the prescribed target state \(x_T\in\mathbb{R}^{90}\) by assigning positive entries to the selected ROI sets and zero elsewhere.
The initial mean \(\mu_0\) was defined by
\begin{equation}
(\mu_0)_j
=
\begin{cases}
+1, & j\in S_{\mathrm{rest}},\\
0, & \text{otherwise},
\end{cases}
\label{eq:mu0_def_brain}
\end{equation}
whereas the target state \(x_T\) was defined by
\begin{equation}
(x_T)_j
=
\begin{cases}
+1, & j\in S_{\mathrm{VCS}},\\
0, & \text{otherwise}.
\end{cases}
\label{eq:xT_def_brain}
\end{equation}
This choice provides a simple and interpretable proxy for a structured transition: the initial mean emphasizes the rest-related set, whereas the target state emphasizes the VCS-favored set.
Accordingly, the vectors \eqref{eq:mu0_def_brain} and \eqref{eq:xT_def_brain} define a transition geometry that moves weight from the rest-related proxy set toward the VCS-favored proxy set.

\subsubsection{Construction of the covariance matrix}
\label{subsubsec:covariance_brain}

As a baseline, we chose an isotropic initial covariance
$\Sigma_0=0.01 I_{90}$.
This yields a simple and transparent baseline in which uncertainty enters only through the initial condition, while the directional structure of \(M(T)\) in \eqref{eq_M} is determined primarily by the mean displacement \({\bb E}[z(T)]\).
This choice is intentional in the present experiment.
By keeping the covariance isotropic and small, we isolate as much as possible the effect of the prescribed transition direction itself.
Hence, if W-AECS exhibits a ranking pattern substantially different from that of the standard AECS, such a difference can be interpreted primarily as an effect of transition-dependent weighting rather than as an artifact of anisotropic uncertainty modeling.

\subsection{Results}
Fig.~\ref{fig:WAECS_T100} illustrates the ROI-wise distribution of W-AECS across 88 individuals at \(T=100\) by showing box plots for the top 5 and bottom 5 ROIs, where the ranking is determined by the average W-AECS values over the 88 individuals.
The top 5 ROIs coincide exactly with the prescribed set \(S_{\mathrm{VCS}}\), indicating that the task-dependent weighting can make the highest-ranked W-AECS nodes resemble those favored by VCS.
However, this resemblance is only partial.
The bottom 5 ROIs are completely different from those of VCS, showing that the overall ranking structure of W-AECS remains substantially different from that of VCS when all ROIs are taken into account.
Moreover, the W-AECS values of the bottom 5 ROIs are much smaller than those of the top 5 ROIs, revealing a pronounced separation between highly emphasized and weakly emphasized nodes.
Such a marked contrast between the top and bottom scores was not observed for either VCS or AECS in the corresponding experiments reported in \cite[Section~IV]{sato2025uniqueness}, and is therefore a distinctive feature of W-AECS in this setting.

\begin{figure}[t]
   \centering
   \begin{minipage}[b]{0.49\linewidth}
     \centering
     \includegraphics[width=\linewidth]{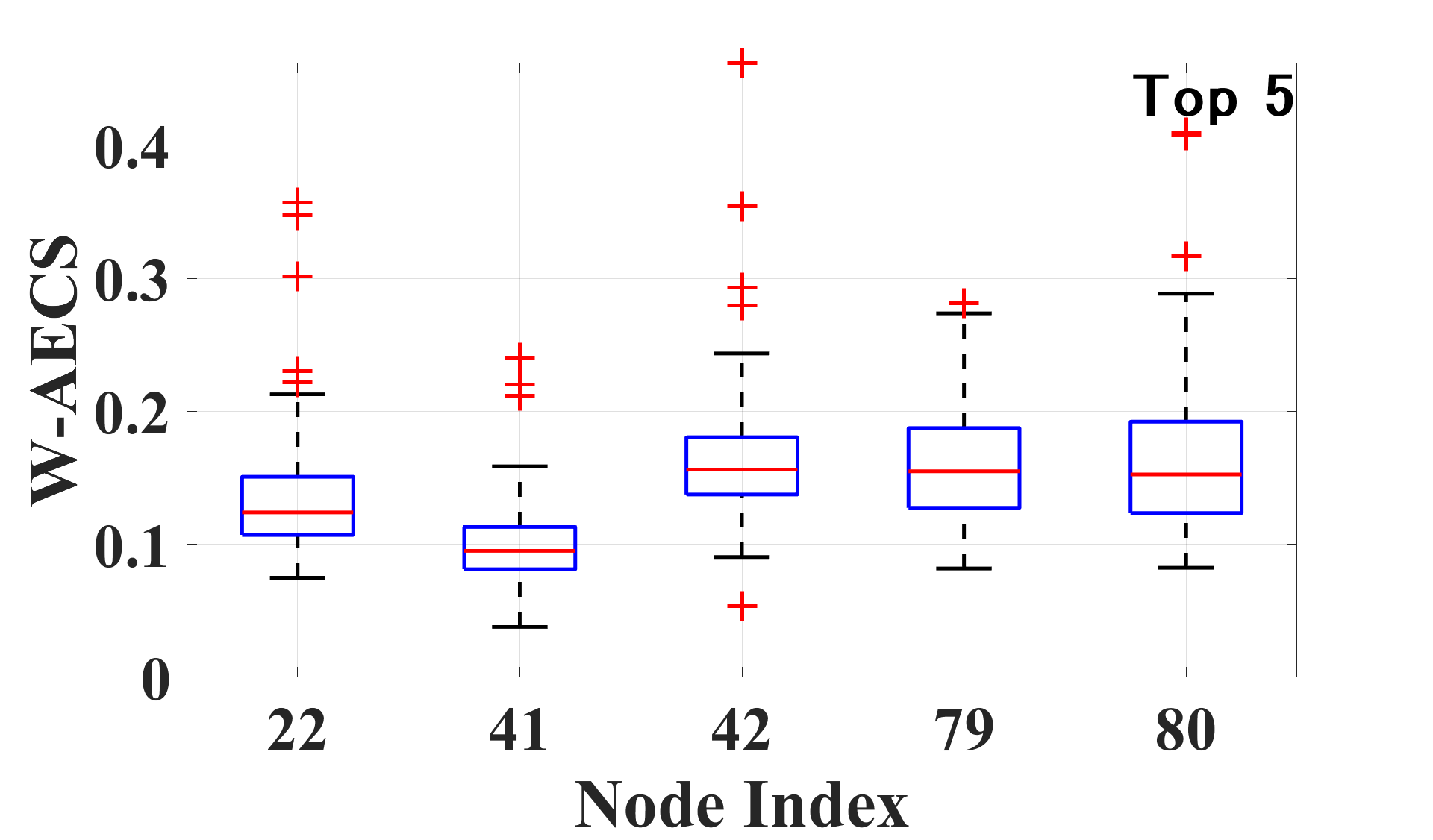}
   \end{minipage}
   \hfill
   \begin{minipage}[b]{0.49\linewidth}
     \centering
     \includegraphics[width=\linewidth]{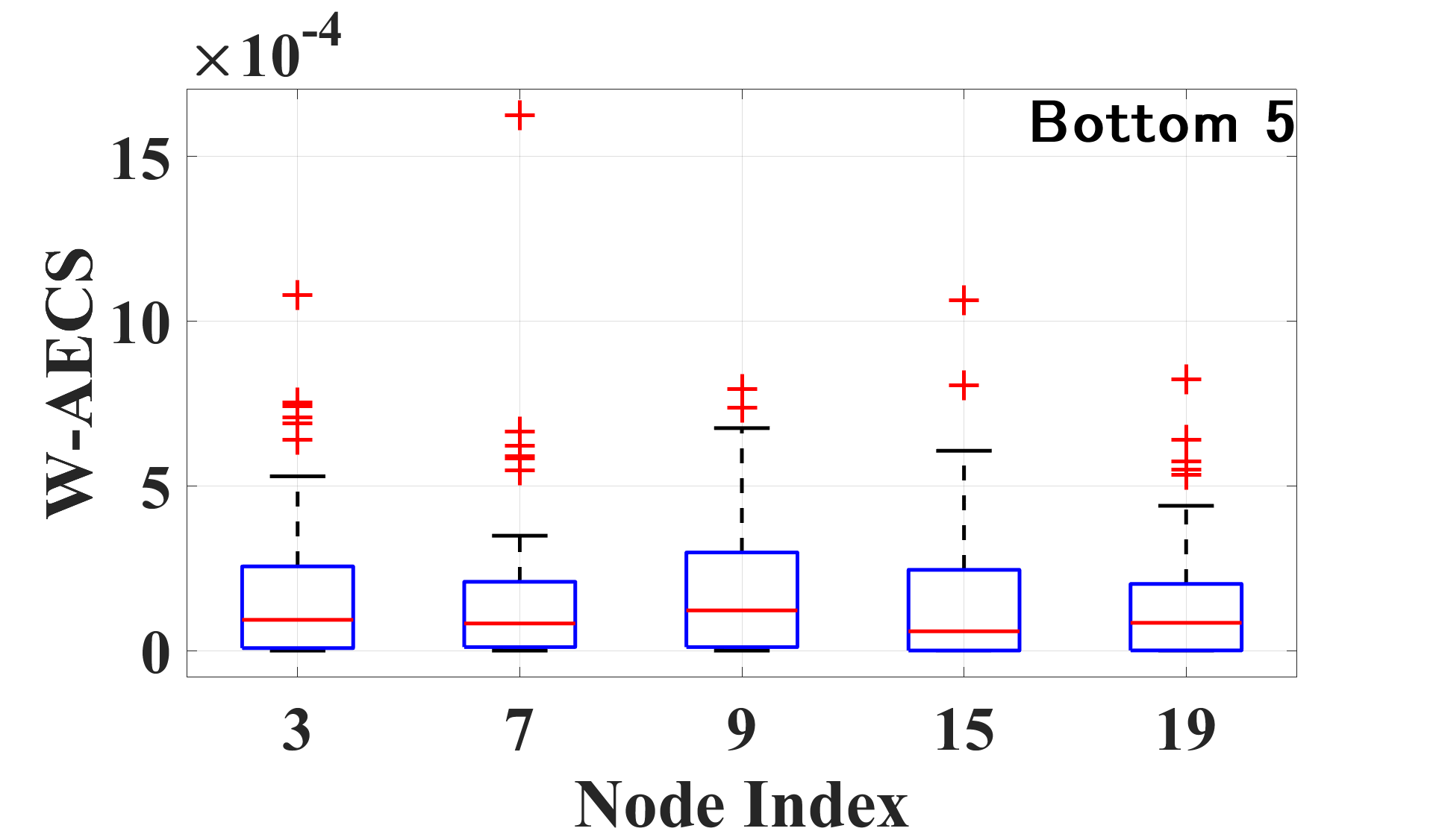}
   \end{minipage}
   \caption{Box plots of W-AECS for $T=100$. (Left: Top 5; Right: Bottom 5.)}
   \label{fig:WAECS_T100}
\end{figure}

Moreover, Table~\ref{tab:corr_waecs} summarizes the Pearson correlations between W-AECS and the standard scores AECS and VCS.
The AECS and VCS values used in this comparison were taken from the results on the same brain-network dataset reported in \cite[Section~IV]{sato2025uniqueness}.
The mean Pearson correlation between W-AECS and VCS was positive but only moderate, whereas that between W-AECS and AECS was negative.
Thus, although the transition-dependent weighting could reshape the very top of the ranking so that W-AECS selected VCS-like ROIs, it did not make W-AECS globally equivalent to VCS over all ROIs.
Rather, these results suggest that W-AECS should be understood as a distinct transition-dependent score: it can reproduce a VCS-like preference at the level of the highest-ranked ROIs, while still maintaining an overall scoring structure different from both the standard AECS and VCS.

\begin{table}[t]
\centering
\caption{Pearson correlations of W-AECS with AECS and VCS over all ROIs.}
\label{tab:corr_waecs}
\begin{tabular}{lcc}
\hline
Comparison & Mean & Std. \\
\hline
W-AECS vs AECS & $-0.251417$ & $0.054017$ \\
W-AECS vs VCS  & $0.547227$  & $0.095398$ \\
\hline
\end{tabular}
\end{table}

\section{Conclusions}

\subsection{Summary}
In this paper, we proposed the weighted average energy controllability score (W-AECS) as a task-dependent extension of the standard average energy controllability score.
Unlike the isotropic formulation of AECS, the proposed framework allows the score to reflect a prescribed transition of interest through a weighting matrix derived from the underlying control task.
This provides a control-theoretic way to evaluate intervention importance when different state-transition directions should not be treated equally.

We also studied the optimization structure of the proposed formulation and showed that it admits a well-posed interpretation as a node-wise score.
In particular, we established convexity properties of the objective and proved a generic uniqueness result with respect to the time horizon.
These results are important because they ensure that the proposed score is not merely an optimizer of an auxiliary allocation problem, but can be interpreted as a meaningful and essentially unique intervention-importance profile.

To illustrate the proposed framework, we conducted a numerical experiment on a structural brain-network dataset.
The experiment was designed to examine whether transition-dependent weighting can alter the ranking structure in a nontrivial way, especially in a direction closer to the volumetric controllability score (VCS), which was previously observed to behave quite differently from the standard AECS on the same dataset.
The results showed that, when the weighting matrix was constructed from a transition toward a VCS-favored target state, the top-ranked ROIs under W-AECS became clearly VCS-like.
At the same time, the overall ranking remained globally different from both the standard AECS and VCS.
This indicates that W-AECS should be understood not as a replacement for either standard score, but as a distinct task-dependent score that can reveal different intervention priorities depending on the transition to be emphasized.

\subsection{Extension}

Although the present paper focuses on linear time-invariant systems of the form \eqref{eq:autonomous}, the proposed W-AECS is not tied to time-invariant dynamics.
This concept can be naturally extended to linear time-varying systems, including temporal network models, as in \cite{umezu2026ltv}.

\subsection{Future Works}

Several directions remain for future work:
\begin{itemize}
    \item
    It is important to better understand how the weighting structure determines the resulting score and to characterize regimes in which W-AECS becomes close to, or substantially different from, existing controllability scores.

    \item
    Following the recent scaled-Gramian approach for infinite-horizon controllability scores in \cite{umezu2026infinite}, it would be of interest to develop an analogous infinite-horizon theory for W-AECS together with a numerically stable computational method.
    Such a formulation may allow the computation of W-AECS through matrix equations such as Lyapunov- or Sylvester-type equations, rather than through direct integration over a long time horizon.

    \item
    More systematic comparisons on brain networks and other complex systems, including ranking similarity, robustness to task specification, and sensitivity to modeling choices, would help clarify the practical value of transition-dependent controllability scoring.
\end{itemize}




\end{document}